\theoremstyle{plain}
\newtheorem{theorem}{Theorem}[section]
\newtheorem{corollary}[theorem]{Corollary}
\newtheorem{lemma}[theorem]{Lemma}
\theoremstyle{definition}
\newtheorem{definition}[theorem]{Definition}
\newtheorem{remark}[theorem]{Remark}
\newcommand{\R}{{\mathbb R}}
\newcommand{\eps}{\varepsilon}
\newcommand{\PP}{\mathbb{P}} 
\newcommand{\EE}{\mathbb{E}} 
\begin{document}  

\title{On  Khintchine type inequalities for pairwise independent Rademacher random variables}
\footnote{BP is pleased to acknowledge the support of a University of Alberta start-up grant and National Sciences and Engineering Research Council of Canada Discovery Grant number 412779-2012.}
\author{Brendan Pass and Susanna Spektor}
\date{}

\address{Brendan Pass
\noindent Address: University of Alberta, Edmonton, AB, Canada, T6G2G1}
\email{pass@ualberta.ca}

\address{Susanna Spektor
\noindent Address: Michigan State University, East Lansing, MI, USA, 48824 }
\email{sanaspek@gmail.com}


\maketitle

\begin{abstract}
We consider Khintchine type inequalities on the $p$-th moments of vectors of $N$  pairwise independent Rademacher random variables.  We establish that an analogue of Khintchine's inequality cannot hold in this setting with a constant that is independent of $N$; in fact, we prove that the best constant one can hope for is at least $N^{1/2-1/p}$.  Furthermore, we show that this estimate is sharp for exchangeable vectors when $p=4$.  As a fortunate consequence of our work, we obtain similar results for $3$-wise independent vectors.

\medskip

\noindent 2010 Classification: 46B06, 60E15
%

\noindent Keywords:
Khintchine inequality,  Rademacher random variables,
$k$-wise independent random variables.
\end{abstract}




\setcounter{page}{1}

\section{Introduction}

Khintchine's inequality is a moment inequality with many important applications in probability and analysis
(see \cite{Garlin, Kah, LT, MS, PShir} among others). It states that  the $L_p$ norm of weighted
independent Rademacher random variables is controlled by their $L_2$ norm; a precise statement follows.
We say that $\eps _0$ is a Rademacher random variable if
$\PP(\varepsilon_0=1)=\PP(\varepsilon_0=-1)=\displaystyle{\tfrac 12}$.
Let $\varepsilon_i$, $i\leq N$,  be independent
copies of $\varepsilon_0$ and $a \in \R^{N}$.
Khintchine's inequality
(see e.g. Theorem 2.b.3 in \cite{LT}  or Theorem 12.3.1 in \cite{Garlin})
states that for any $p\geq 2$ one has
\begin{align}\label{1}
 \left(\EE\left|\sum_{i=1}^{N}a_i\varepsilon_i\right|^p\right)^{\frac 1p}\leq
 C(p) \, \|a\|_{2}=C(p) \left(\EE\left|\sum_{i=1}^{N}a_i\varepsilon_i\right|^2\right)^{\frac 12}.
\end{align}

Note that the constant $C(p)$ does not depend on $N$.  It is natural to ask whether the independence condition can be relaxed; indeed, random vectors with dependent coordinates arise in many problems in probability and analysis (see e.g. \cite{G} and the references therein).  In this short paper, we are interested in what can be said when the independence assumption on the coordinates is relaxed to pairwise (or, more generally, $k$-wise)  independence.  

\begin{definition} We call an $N$-tuple $\varepsilon=\{\varepsilon_i\}_{i\geq 1}^N$ of Rademacher random variables a \textit{Rademacher vector}.
For a fixed non-negative integer $k$, a Rademacher vector is called \emph{$k$-wise independent} if  any  subset  $\{\varepsilon_{i_1}, \varepsilon_{i_2}, \ldots, \varepsilon_{i_k}\}$ of length $k$ is mutually independent.

\end{definition}

When $k=2$ in the preceding definition, we will often use the terminology \emph{pairwise independent} in place of $2$-wise independent.
As it will be useful in what follows, we note that instead of random variables, it is equivalent to consider probability measures $P$ on the set $\{-1,1\}^N$, where $P=law(\varepsilon)$.  The condition that $\varepsilon$ is a Rademacher vector is then equivalent to the condition that the projections $law(\varepsilon_i)$ of $P$ onto each copy of $\{-1,1\}$ is equal to $P_1:=\frac{1}{2}[\delta_{-1} +\delta_1]$.  The $k$-wise independence condition is equivalent to the condition that the projections $law(\varepsilon_{i_1},\ldots,\varepsilon_{i_k})$  of $P$ onto each $k$-fold product $\{-1,1\}^k$ is product measure $\otimes^kP_1$.

In general,  sequences of $k$-wise independent random variables, for $k \geq 2$, share some of the properties of the mutually independent ones, including the second Borel-Cantelli lemma and the strong law of large numbers (see e.x. \cite{law}).   Other properties of mutually independent sequences, such as the central limit theorem, fail to carry over to the pairwise independent setting, however.  
For more on $k$-wise independent sequences and their construction, see, for example \cite{D1, R1, R2}.

Our goal is to determine whether Khintchine's inequality holds for $k$-wise independent Rademacher random variables, and, if not, to understand how badly it fails.  More precisely, if we define


\begin{equation}\label{constant}
C(N,p,k) = \sup_{\substack{a\in \mathbb{R}^N: ||a||_2 =1 \\ \varepsilon \text{ is a k -wise independent Rademacher vector}} }\left(\EE \left|\sum_{i=1}^N a_i\varepsilon_i\right|^p\right)^{1/p},
\end{equation}
then the questions we are interested in can be formulated as:
\begin{enumerate}
\item[1.] Is $C(N,p,k)$ bounded as $N \rightarrow \infty$, for a fixed $p$ and $k$?
\item[2.] If not, what is the growth rate of $C(N,p,k)$?
\end{enumerate}
Note that the $C(N,p,k)$ form a monotone decreasing sequence in $k$, as the $k$-dependence constraint becomes more and more stringent as $k$ increases.  We define $C(N,p,\infty)$ to be the best constant in Khintchine's inequality (for independent random variables):
$$
C(N,p,\infty) = \sup_{a\in \mathbb{R}^N: ||a||_2 =1 }\left(\EE\left|\sum_{i=1}^N a_i\bar \varepsilon_i\right|^p\right)^{1/p}.
$$
where the $\bar \varepsilon_i$ are mutually independent Rademacher random variables.  
Note that, as mutual independence implies $k$-wise independent for any $k$, we have $C(N,p,k) \geq C(N,p,\infty)$.  In this notation, the classical Khintchine inequality means that $C(N,p,\infty)$ is bounded as $N$ goes to $\infty$ for each fixed $p$.

Some properties of $C(N,p,k)$ are easily discerned.  By an application of Holder's inequality, we get, for any Rademacher $\varepsilon$ and any $a$,

\begin{equation}\label{holderbound}
 \left(\EE\left|\sum_{i=1}^N a_i\varepsilon_i\right|^p\right)^\frac{1}{p} \leq \sqrt{N} ||a||_2
\end{equation}
and so we  have
$$
C(N,p,k) \leq \sqrt{N}.
$$
In fact, for a random vector with $law(\varepsilon)=\frac{1}{2}[\delta_{1,1,1,\ldots,1} +\delta_{-1,-1,-1,\ldots,-1} ]$, we get equality in \eqref{holderbound}.  This $\epsilon$ is $1$-wise independent, which simply means that it has Rademacher marginals and so we have $C(N,p,1) =\sqrt{N}$.  Clearly, this vector is not pairwise independent, however, and so it provides no further information on $C(N,p,k)$ for $k \geq 2$.

Let us also mention that, when $p$ is an even integer, and $k  \geq p$, it is actually a straightforward calculation to show that $C(N,p,k)=C(N,p,\infty) \approx 1$ is independent of $k$ (that is, Khintchine's inequality for $k$-wise independent random variables holds with the same constant as in the independence case).  This seems to be a "folklore" result, which is well known to experts, but we were unable to find a suitable reference.

In this paper, we focus on the $k=2$ case.  We prove that for $p \geq 2$ and $N$ even, $C(N,p,2) \geq N^{1/2-1/p}$, providing a negative answer to the first question above. Moreover, if we define $C_e(N,p,k)$ as in \eqref{constant}, but with the supremum restricted to \emph{exchangeable} Rademacher vectors $\varepsilon$,  and consider the $p=4$ case, we prove that $C_e(N,4,2) = N^{1/2-1/4} =N^{1/4}$.

As a fortunate consequence of our work here, we obtain analagous results for $k=3$. Understanding the $k \geq 4$ case remains an interesting open question.


\section{Estimates on $C(N,p,2)$}
The following Theorem provides a negative  answer to the first question in the introduction when $k=2$, and provides some information about the second question in the same case.
\begin{theorem}\label{main}
Let $N=2n$ be even and set $a=(1,1,....1) \in \mathbb{R}^N$.  Then, for all $p \geq 2$,

\begin{equation}\label{theorem}
\sup_{ \varepsilon \text{ is a k -wise independent Rademacher vector} }\left(\EE \left|\sum_{i=1}^N a_i\varepsilon_i\right|^p\right)^{1/p}= N^{1/2-1/p} \|a\|_2
\end{equation}

Consequently,
 $C(N, p, k) \geq N^{1/2-1/p}$. 


\end{theorem}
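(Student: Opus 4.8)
The plan is to establish the equality \eqref{theorem} by proving the two matching inequalities and then reading off the consequence. Since $a=(1,\dots,1)$, we have $\sum_{i=1}^N a_i\varepsilon_i = S_N$ where $S_N:=\sum_{i=1}^N\varepsilon_i$, and $\|a\|_2=\sqrt N$; thus the asserted value is $N^{1/2-1/p}\cdot N^{1/2}=N^{1-1/p}$, and it suffices to show $\sup(\EE|S_N|^p)^{1/p}=N^{1-1/p}$, the supremum running over pairwise independent Rademacher vectors.

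For the upper bound I would use only two facts: the deterministic bound $|S_N|\le N$, and the second-moment identity $\EE[S_N^2]=N$. The latter holds for any pairwise independent Rademacher vector because $\EE[\varepsilon_i^2]=1$ while $\EE[\varepsilon_i\varepsilon_j]=\EE[\varepsilon_i]\EE[\varepsilon_j]=0$ for $i\ne j$. Since $p\ge2$, writing $|S_N|^p=|S_N|^{p-2}|S_N|^2$ and bounding the first factor by $N^{p-2}$ gives
\[
\EE|S_N|^p \le N^{p-2}\,\EE[S_N^2]=N^{p-1},
\]
so $(\EE|S_N|^p)^{1/p}\le N^{1-1/p}$. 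This half uses nothing beyond pairwise independence, so it holds verbatim for every $k\ge2$.

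For the matching lower bound I would exhibit a pairwise independent law attaining the bound. Equality in the interpolation above forces $|S_N|\in\{0,N\}$ almost surely, which suggests the mixture that assigns mass $\tfrac{1}{2N}$ to each of the constant vectors $(1,\dots,1)$ and $(-1,\dots,-1)$ and mass $1-\tfrac1N$ to the uniform distribution over the \emph{balanced} configurations (those with exactly $n$ coordinates equal to $+1$); the weights sum to $1$, so this is a genuine probability measure. I would then check that each marginal is Rademacher (the two atoms contribute mean zero by symmetry, and each balanced configuration is symmetric) and that the vector is pairwise independent. The crucial computation is that under the uniform law on balanced configurations $\sum_i\varepsilon_i=0$ identically, so $\EE[(\sum_i\varepsilon_i)^2]=0$ yields $\EE[\varepsilon_i\varepsilon_j]=-\tfrac{1}{N-1}$ for $i\ne j$; this exactly cancels the positive contribution $\tfrac1N$ of the two constant atoms, giving $\EE[\varepsilon_i\varepsilon_j]=0$. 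Finally $\EE|S_N|^p=\tfrac1N\cdot N^p=N^{p-1}$, matching the upper bound.

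The genuinely delicate point is this construction: guessing that the extremal law concentrates $|S_N|$ on $\{0,N\}$ and, above all, recognizing that the uniform measure on balanced configurations supplies precisely the negative pairwise correlation $-\tfrac{1}{N-1}$ needed to offset the two constant atoms while preserving Rademacher marginals. Everything else is routine. Combining the two bounds proves \eqref{theorem}; applying it to the normalized vector $a/\sqrt N$ together with this $\varepsilon$ realizes the ratio $(\EE|S_N/\sqrt N|^p)^{1/p}=N^{1/2-1/p}$, whence $C(N,p,2)\ge N^{1/2-1/p}$.
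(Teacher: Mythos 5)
Your proof is correct, and your upper bound is obtained by a genuinely different (and more elementary) route than the paper's. For optimality, the paper sets up a duality argument: it constructs a potential $u:\{-1,1\}^2\to\mathbb{R}$ with $\left|\sum_i\varepsilon_i\right|^p\le\sum_{i<j}u(\varepsilon_i,\varepsilon_j)$ pointwise, observes that the expectation of the right-hand side depends only on the twofold marginals (hence is constant over pairwise independent vectors), and checks that equality holds exactly on the support of the extremal measure. You instead write $|S_N|^p\le N^{p-2}S_N^2$ and use $\EE[S_N^2]=N$, which follows from pairwise independence alone; this is shorter and makes transparent that equality forces $|S_N|\in\{0,N\}$ a.s., motivating the construction. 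What the paper's heavier machinery buys is the \emph{uniqueness} of the maximizer (equality in the potential inequality only on the support of $P$), which is used in a later remark to identify $law(\varepsilon)$ as an extremal point of the convex set of symmetric pairwise independent measures; your argument also yields uniqueness with a little extra bookkeeping, but you do not state it. Your lower-bound construction is the same measure as the paper's ($\frac1N P_a+\frac{N-1}{N}P_b$), and your verification of pairwise independence via $\EE[\varepsilon_i\varepsilon_j]=\frac1N-\frac{N-1}{N}\cdot\frac1{N-1}=0$ is legitimate and slicker than the paper's direct computation of the atoms of the twofold marginal, since for $\{-1,1\}$-valued variables with mean zero the joint law of a pair is determined by the correlation.
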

\begin{proof}
We explictly construct a pairwise independent Rademacher vector $\varepsilon=(\varepsilon_1,...\varepsilon_N)$  for which we get equality in \eqref{theorem}, and then show that this maximizes the left hand side over the set of all such vectors.
To do this, we will define the probability measure $P=law(\varepsilon)$.

We define $P=\frac{1}{N} P_a+ \frac{N-1}{N} P_b$ where $P_a=\frac{1}{2}[\delta_{1,1,1,\ldots,1} +\delta_{-1,-1,-1,\ldots,-1}]$ is uniform measure on the two points $(1,1,1,\ldots,1), (-1,-1,\ldots,-1) \in \{-1,1\}^N$ and $P_b$ is uniform measure on the set of all points with an equal number of $1$'s and $-1$'s; that is, points which are permutations of $\{{\underbrace{1, 1, \ldots, 1}_{N/2 \, \textit{of them}}\underbrace{-1, -1, \ldots, -1}_{N/2 \, \textit{of them}}}\}$.
We first verify that this is pairwise independent probability measure; that is, that it's twofold marginals are $\frac{1}{4}(\delta_{1,1} +\delta_{1,-1} +\delta_{-1,1} +\delta_{-1,-1})$.  By symmetry between the coordinates, it suffice to verify this fact for the projection $P_2$ on the first two copies of $\{-1,1\}$.  To see this, we have

$$
P_2(1,1) = \frac{1}{N}P_a(1,1,1,\ldots,1) +\frac{N-1}{N}P_b\{\varepsilon:(\varepsilon_1,\varepsilon_2)=(1,1)\}.
$$
Now, $P_a(1,1,1,\ldots,1) =\frac{1}{2}$, and it  is easy to see that $P_b\{\varepsilon:(\varepsilon_1,\varepsilon_2)=(1,1)\} =\frac{N/2-1}{2(N-1)}$, implying
$$
P_2(1,1) = \frac{1}{2N} +\frac{(N-1)(N/2-1)}{2N(N-1)}=\frac{1}{4}.
$$
Similar calculations imply $P_2(1,-1) =  P_2(-1,1)=P_2(-1,-1)=\frac{1}{4}$, and so $P$ is pairwise independent.

Now, letting $\varepsilon =(\varepsilon_1,...\varepsilon_N)$ be a random variable with $law(\varepsilon) =P$, and noting that $ |\sum_{i=1}^N\varepsilon_i|^p$ is $0$ for points in the support of $P_b$ and $N$ for points in the support of $P_a$, we have

$$
\EE \left|\sum_{i=1}^N\varepsilon_i\right|^p =\frac{1}{N}N^p=N^{p-1}
$$
Noting that $||a||_2 =\sqrt{N}$, it follows that
$$
\left[\EE\left|\sum_{i=1}^N\varepsilon_i\right|^p\right] ^{1/p}=N^{1-1/p}=\sqrt{N} N^{1/2-1/p}=||a||_2N^{1/2-1/p}
$$

 It remains to  show that the $\varepsilon$ we have constructed is optimal in \eqref{theorem} .   Define the functions $u:\{1,-1\}^2 \rightarrow \mathbb{R}$ by
$$
u(1,1)=u(-1,-1)=N^p/{N \choose 2}
$$

and

 $$
u(1,-1) =u(-1,1) = -\frac{N-2}{N}u(1,1)= -\frac{(N-2)N^{p-1}}{{N \choose 2}}.
$$
We will  show that for any $(\varepsilon_1,\ldots,\varepsilon_N) \in \{  1,-1 \}^N$, we have

\begin{equation}\label{potentialinequality}
\left|\sum_{i=1}^N\varepsilon_i\right|^p \leq \sum_{i < j}^Nu(\varepsilon_i,\varepsilon_j)
\end{equation}
with equality only on the points in the support of $P$; that is, at $(1,1,....,1), (-1,-1,....,-1)$ and permutations of  $\{{\underbrace{1, 1, \ldots, 1}_{N/2 \, \textit{of them}}\underbrace{-1, -1, \ldots, -1}_{N/2 \, \textit{of them}}}\}$.

For $(\varepsilon_1,\ldots,\varepsilon_N) \in \{  1,-1 \}^N$, let $n$ be the number of $\varepsilon_i$'s which are equal to positive $1$.  The  inequality \eqref{potentialinequality} which we are attempting to establish is equivalent to

$$
|n-(N-n)|^p \leq {n \choose 2} u(1,1)+ {N-n \choose 2}u(-1,-1) +n(N-n)u(1,-1)
$$
or

\begin{eqnarray*}
|2n-N|^p& \leq&\left[ {n \choose 2} + {N-n \choose 2} -n(N-n)\frac{N-2}{N}\right]u(1,1)\\
&=&\left[ \frac{n(n-1)}{2} + \frac{(N-n) (N-n-1)}{2} -n(N-n)\frac{N-2}{N}\right]\frac{2N^p}{N(N-1)}\\
\left|\frac{2n-N}{N}\right|^p& \leq &\left[ \frac{n(n-1)}{2} + \frac{(N-n) (N-n-1)}{2} -n(N-n)\frac{N-2}{N}\right]\frac{2}{N(N-1)}
\end{eqnarray*}
Now, a straightforward calculation shows that the right hand side is equal to $\displaystyle{\left|\frac{2n-N}{N}\right|^2}$.  As clearly $0 \leq n \leq N$, we have that $\displaystyle{\left|\frac{2n-N}{N}\right| \leq 1}$.  As $ p \geq 2$, we then clearly have $\displaystyle{\left|\frac{2n-N}{N}\right|^p \leq \left|\frac{2n-N}{N}\right|^2}$, with equality only when $2n-N =0,N$ or $-N$. That is, we have equality precisely when $n=0,\frac{N}{2}$ or $N$, which correspond exactly to points in the support of $P$.  This establishes \eqref{potentialinequality}, with equality only when $\epsilon$ is in the support of $P$.

Now note that by \eqref{potentialinequality} for any pairwise independent  Rademacher vector $\tilde \varepsilon$ on $\{  1,-1 \}^N$, we have

$$
\EE\left|\sum_{i=1}^N\tilde \varepsilon_i\right|^p \leq\EE \big(\sum_{i \leq j}^Nu(\tilde \varepsilon_i,\tilde \varepsilon_j) \big)= \sum_{i \leq j}^N\EE \big(u(\tilde \varepsilon_i,\tilde \varepsilon_j)\big)
$$
But the right hand side is constant on the set of pairwise independent Rademacher vectors, as it depends only on the twofold vectors $(\tilde \varepsilon_i,\tilde \varepsilon_j)$.  As we have equality in \eqref{potentialinequality} $P$ almost surely, for the particular sequence $\varepsilon$ with $law(\varepsilon)=P$, we get

$$
\EE\left|\sum_{i=1}^N \varepsilon_i\right|^p  =\EE \big(u( \varepsilon_i, \varepsilon_j)\big),
$$
and therefore,
$$
\EE\left|\sum_{i=1}^N\tilde \varepsilon_i\right|^p \leq \EE\left|\sum_{i=1}^N\varepsilon_i\right|^p
$$
completing the proof.

\end{proof}
\begin{remark}

It is worth mentioning that the linear program \eqref{theorem} is reminiscent of a discrete version of the multi-marginal optimal transport problem (see \cite{P} and the references therein).  The difference is that here, the twofold marginals $P_2 =law (\varepsilon_i,\varepsilon_j)$ are prescribed, rather than the marginal $P_1 =law(\varepsilon_i)$.  Moreover, the function $u$ that shows up in the proof can be interpreted in terms of the dual  program, which is is to minimize

$$
\EE\left(\sum_{i \neq j}u_{ij} (\varepsilon_i,\varepsilon_j)\right)
$$
over all collections of functions $u_{ij}:\{-1,1\}^2 \rightarrow \mathbb{R}$, for $1\leq i\neq j\leq N$, which satisfy the constraint
$$
\sum_{i \neq j}u_{ij} (\varepsilon_i,\varepsilon_j) \geq \left|\sum_{i=1}^N\varepsilon_i\right|^p
$$
for all $\varepsilon =(\varepsilon_1,\ldots,\varepsilon_N) \in \{-1,1\}^N$.  The minimizing functions for this dual program are exactly $u_{ij}=u$ for all $i,j$, where  $u$ is as in the proof above.


\end{remark}

Next, we turn our attention to the second question in the introduction, and try to determine the precise behaviour of $C(N,p.k)$.  For this, we study only the $k=2, p=4$ case, and restrict our attention to exchangeable Rademacher vectors.

\begin{lemma}
For any $a \in \mathbb{R}^N$ with $||a||_2=1$ and any pairwise independent, exchangeable $\varepsilon$, we have

$$
\EE\left(\left|\sum_{i=1}^Na_i\varepsilon_i\right|^4 \right)\leq \max\left\{\EE\left(\left|\sum_{i=1}^N\frac{1}{\sqrt{N}}\varepsilon_i\right|^4\right), \EE\left(\left|\sum_{i=1}^Na_i\bar \varepsilon_i\right|^4\right)\right\}
$$
where $\bar \varepsilon$ denotes an independent Rademacher vector.
\end{lemma}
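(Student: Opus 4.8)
The plan is to evaluate the fourth moment exactly and then compare it, term by term, against the two quantities on the right. The engine is the algebraic identity
$$
\EE\Big[\Big(\sum_{i=1}^N a_i\varepsilon_i\Big)^4\Big]=3-2\sum_{i=1}^N a_i^4+24\,\tau\, e_4(a),\qquad \tau:=\EE[\varepsilon_1\varepsilon_2\varepsilon_3\varepsilon_4],
$$
where $e_4(a)=\sum_{i<j<k<l}a_ia_ja_ka_l$ and I have used $\|a\|_2^2=1$. To derive it I would expand $\big(\sum a_i\varepsilon_i\big)^4=\sum_{i,j,k,l}a_ia_ja_ka_l\,\EE[\varepsilon_i\varepsilon_j\varepsilon_k\varepsilon_l]$ and note that, since $\varepsilon_i^2\equiv 1$, the mixed moment depends only on the multiplicity type of the index multiset. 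Pairwise independence kills the $(3,1)$ and $(2,1,1)$ types, each of which reduces to a two-fold moment $\EE[\varepsilon_r\varepsilon_s]=0$; the two all-even types $(4)$ and $(2,2)$ contribute $\sum a_i^4$ and $3\sum_{r\ne s}a_r^2a_s^2=3(1-\sum a_i^4)$; and the all-distinct type $(1,1,1,1)$ carries the only moment not fixed by pairwise independence. Here \emph{exchangeability} is essential: it forces that moment to be the single constant $\tau$, and the ordered sum over distinct quadruples equals $24\,e_4(a)$.

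With the identity in hand the comparison becomes transparent. The independent case is exactly $\tau=0$, so $\EE[(\sum a_i\bar\varepsilon_i)^4]=3-2\sum a_i^4=:B$, while the equal-weight case (same $\varepsilon$) is $A:=3-\tfrac2N+24\tau\tfrac{\binom N4}{N^2}$. The identity then reads $\EE[(\sum a_i\varepsilon_i)^4]=B+24\,\tau\,e_4(a)$. If $\tau\,e_4(a)\le 0$ the fourth moment is at most $B$, and we are done by the second term in the maximum. It remains to treat $\tau\,e_4(a)>0$, where I must show the fourth moment is at most $A$, i.e.
$$
24\,\tau\big(e_4(a)-\tbinom N4/N^2\big)\le 2\big(\textstyle\sum_i a_i^4-1/N\big).
$$

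For this I would assemble three ingredients. First, $\sum a_i^4\ge 1/N$ by the power-mean (Cauchy--Schwarz) inequality, with equality at equal magnitudes. Second, the extremal fact $e_4(a)\le\binom N4/N^2$ on the unit sphere, which I would prove by reducing to nonnegative entries via $e_4(a)\le e_4(|a|)$ and then applying a Maclaurin/AM--GM bound at the symmetric point. Third, the a priori range of $\tau$ coming from the moment constraints on $S_0=\sum_i\varepsilon_i$: pairwise independence gives $\EE[S_0^2]=N$, while $N^2\le\EE[S_0^4]\le N^3$ (the lower bound by Jensen, the upper bound being the maximizer from Theorem \ref{main}) translates into $-\tfrac{2}{(N-2)(N-3)}\le\tau\le\tfrac1{N-3}$. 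When $\tau>0$ we have $e_4(a)>0$, and the first two ingredients bound the two summands separately to yield $A$.

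The main obstacle is the complementary subcase $\tau<0$ (forcing $e_4(a)<0$, i.e. genuinely mixed-sign coefficients), where the deficit $\sum a_i^4-1/N$ and the surplus $24|\tau|\big(\binom N4/N^2+|e_4(a)|\big)$ point in opposite directions and must be balanced simultaneously. Here I expect to need a sharp Schur convexity/majorization estimate tying $e_4(a)$ to $\sum a_i^4$ under the constraint $\|a\|_2=1$, combined with the numerical range of $|\tau|$ above. Controlling this interaction for signed $a$ is the delicate heart of the argument, and is where I would concentrate the real work.
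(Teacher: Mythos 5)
Your expansion and the two easy cases are exactly right and coincide with the paper's argument: your identity $\EE(\sum a_i\varepsilon_i)^4=\EE(\sum a_i\bar\varepsilon_i)^4+24\,\tau\,e_4(a)$ is the paper's \eqref{expand}, the case $\tau e_4(a)\le 0$ is handled by the second term of the maximum, and for $\tau>0$, $e_4(a)>0$ your two ingredients ($\sum a_i^4\ge 1/N$ and $e_4(a)\le\binom{N}{4}/N^2$ via Maclaurin) are precisely the paper's \eqref{quarticbound}. The divergence is in the leftover case. The paper declares ``without loss of generality $a_i\ge 0$'', which forces $e_4(a)\ge 0$ and makes the two cases above exhaustive; you keep signed $a$, isolate the case $\tau<0<-e_4(a)$, and leave it open.

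That open case is a genuine gap, and it cannot be closed by the route you sketch: the inequality you would need, $24|\tau|\bigl(|e_4(a)|+\binom{N}{4}/N^2\bigr)\le 2\bigl(\sum_i a_i^4-1/N\bigr)$, is false, and so is the lemma itself in this regime. Take $N=4$, let $\varepsilon$ be uniform on $\{\varepsilon\in\{-1,1\}^4:\varepsilon_1\varepsilon_2\varepsilon_3\varepsilon_4=-1\}$ (exchangeable, pairwise --- indeed $3$-wise --- independent, with $\tau=-1$), and $a=\tfrac12(1,1,1,-1)$. Then $\sum_i a_i\varepsilon_i$ equals $\pm 2$ with probability $1/8$ each and $0$ otherwise, so $\EE|\sum_i a_i\varepsilon_i|^4=4$, while $\EE|\sum_i\tfrac12\varepsilon_i|^4=1$ and $\EE|\sum_i a_i\bar\varepsilon_i|^4=5/2$. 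So your instinct that the signed case is the ``delicate heart'' is correct, but no Schur-convexity or majorization estimate will rescue it. The same example exposes why the paper's reduction is not harmless: replacing $a_i$ by $|a_i|$ amounts to flipping the signs of some coordinates of $\varepsilon$, which preserves pairwise independence but destroys exchangeability and changes the value of $\EE|\sum_i\tfrac{1}{\sqrt N}\varepsilon_i|^4$. The lemma needs either the added hypothesis $a_i\ge 0$ or an added symmetry of $law(\varepsilon)$ under coordinate sign flips; as stated, neither your argument nor the paper's establishes it.
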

\begin{proof}
We assume, without loss of generality, that $a_i \geq 0$ for each $i$.  After expanding the power, and noting that the Rademacher
condition implies $\varepsilon_i^2=1$ almost surely, the pairwise independence condition implies
 \begin{eqnarray*}
\EE(\varepsilon_i\varepsilon_j\varepsilon_k^2)=\EE(\varepsilon_i\varepsilon_j) =\EE(\bar \varepsilon_i \bar \varepsilon_j)=\EE(\bar \varepsilon_i\bar \varepsilon_j \bar\varepsilon_k^2)\\
\EE(\varepsilon_i\varepsilon_j^3)=\EE(\bar \varepsilon_i \bar \varepsilon_j^3)\\
\EE(\varepsilon_i^2\varepsilon_j^2)=\EE(\bar \varepsilon_i^2 \bar \varepsilon_j^2)\\
\EE(\varepsilon_i^4)=\EE(\bar \varepsilon_i^4 ).
\end{eqnarray*}
Therefore, we have
\begin{equation}
\EE\left(\left|\sum_{i=1}^Na_i\varepsilon_i\right|^4 \right) =\EE\left(\left|\sum_{i=1}^Na_i\bar \varepsilon_i\right|^4 \right)+\sum_{i,j,k,l\text{  distinct}}a_ia_ja_ka_l \EE\left(\varepsilon_i\varepsilon_j\varepsilon_k\varepsilon_l\right)
\end{equation}

By exchangability,  $\displaystyle{\EE\left(\varepsilon_i\varepsilon_j\varepsilon_k\varepsilon_l\right)}$ is independent of $i,j,k$ and $l$; denoting  $\EE\left(\varepsilon_i\varepsilon_j\varepsilon_k\varepsilon_l\right):=c$, this gives us
\begin{equation}\label{expand}
\EE\left(\left|\sum_{i=1}^Na_i\varepsilon_i\right|^p \right) =\EE\left(\left|\sum_{i=1}^Na_i\bar \varepsilon_i\right|^p \right)+c\sum_{i,j,k,l\text{  distinct}}a_ia_ja_ka_l .
\end{equation}
Now, if $c \leq 0$, the above is less than  $\EE \left(\left|\sum_{i=1}^Na_i\bar \varepsilon_i\right|^p\right)$, and the proof is complete.  If, on the other hand, $c \geq 0$, we have, by Maclaurin's inequality,
\begin{eqnarray}
\sum_{i,j,k,l\text{  distinct}}a_ia_ja_ka_l &\leq&4!{ N \choose 4}\left[\frac{1}{2!{N \choose 2}} \sum_{i \neq j}a_ia_j\right]^2\notag\\
&\leq&4!{ N \choose 4}\left[\frac{1}{4{N \choose 2}} \sum_{i \neq j}(a_i^2 +a_j^2)\right]^2\notag\\
&=&24{ N \choose 4}\left[\frac{1}{4{N \choose 2}} 2(N-1)||a||_2^2\right]^2\notag\\
&=&24{ N \choose 4}\frac{1}{N^2}.\label{quarticbound}
\end{eqnarray}
We have equality when all of the $a_i =\frac{1}{\sqrt{N}}$.  Note that this implies that

\begin{equation}\label{equalelements}
\EE\left(\left|\sum_{i=1}^N\frac{1}{\sqrt{N}}\varepsilon_i\right|^4 \right)=\EE\left(\left|\sum_{i=1}^N\frac{1}{\sqrt{N}}\bar \varepsilon_i\right|^4 \right) +c24{ N \choose 4}\frac{1}{N^2}
\end{equation}

Now, it is well known that  $\displaystyle{\EE\left(\left|\sum_{i=1}^Na_i\bar \varepsilon_i\right|^4\right) \leq \EE\left(\left|\sum_{i=1}^N\frac{1}{\sqrt{N}}\bar \epsilon_i\right|^4\right)}$, and so plugging this and \eqref{quarticbound} into \eqref{expand}, we have

\begin{eqnarray}
\EE\left(\left|\sum_{i=1}^Na_i\varepsilon_i\right|^4 \right) &\leq& \EE\left(\left|\sum_{i=1}^N\frac{1}{\sqrt{N}}\bar \varepsilon_i\right|^4 \right) +c24{ N \choose 4}\frac{1}{N^2}\\
&=&\EE\left(\left|\sum_{i=1}^N\frac{1}{\sqrt{N}}\varepsilon_i\right|^4 \right),
\end{eqnarray}
where the last line follows from \eqref{equalelements}.  This completes the proof.
\end{proof}
Theorem \ref{main}, the classical Khintchine inequality, and the preceding lemma now imply the following variant of Khintchine's inequality, for $p=4$ and pairwise independent, exchangeable Rademacher random vectors.
\begin{corollary}
We have $C_{e}(N,4,2) = N^{1/4}$.  In particular, for any  $a \in \mathbb{R}^N$  and any pairwise independent exchangeable Rademacher vector $\epsilon$, we have

$$
\left(\EE\left(\left|\sum_{i=1}^Na_i\varepsilon_i\right|^4 \right) \right)^{\frac{1}{4}} \leq N^{\frac{1}{4}}||a||_2.
$$
\end{corollary}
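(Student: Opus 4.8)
The plan is to prove the equality $C_e(N,4,2) = N^{1/4}$ by establishing matching upper and lower bounds. The upper bound comes from the preceding Lemma, fed with Theorem~\ref{main} and the classical Khintchine inequality; the lower bound comes from the explicit extremal measure constructed in the proof of Theorem~\ref{main}, once we observe it is exchangeable.

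For the upper bound, I would first use homogeneity of both sides in $a$ to reduce to the normalization $\|a\|_2 = 1$, and fix an arbitrary pairwise independent, exchangeable Rademacher vector $\varepsilon$. The Lemma bounds $\EE(|\sum_{i=1}^N a_i\varepsilon_i|^4)$ by the maximum of two quantities. The second of these, $\EE(|\sum_{i=1}^N a_i\bar\varepsilon_i|^4)$, is controlled by the classical Khintchine inequality for independent Rademachers, where the exact identity $\EE(|\sum_{i=1}^N a_i\bar\varepsilon_i|^4) = 3\|a\|_2^4 - 2\sum_{i=1}^N a_i^4 \le 3$ holds. The first quantity, $\EE(|\sum_{i=1}^N \tfrac{1}{\sqrt N}\varepsilon_i|^4)$, is precisely what Theorem~\ref{main} controls: applied with the equal-weight vector $(1,\dots,1)$ and $p=4$, that theorem gives $\sup_\varepsilon \EE(|\sum_{i=1}^N \varepsilon_i|^4) = N^{3}$, and rescaling each coordinate by $N^{-1/2}$ yields $\EE(|\sum_{i=1}^N \tfrac{1}{\sqrt N}\varepsilon_i|^4) \le N$. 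Hence the maximum in the Lemma is at most $\max\{N,3\} = N$ once $N \ge 3$, so taking fourth roots gives $C_e(N,4,2) \le N^{1/4}$.

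For the lower bound, the key point is that the optimal law $P = \tfrac1N P_a + \tfrac{N-1}{N}P_b$ produced in the proof of Theorem~\ref{main} is \emph{exchangeable}: both $P_a$ (uniform on the two constant sign patterns) and $P_b$ (uniform on all permutations of the balanced pattern) are manifestly invariant under permutations of the coordinates. Therefore the vector realizing equality in Theorem~\ref{main} is admissible in the definition of $C_e(N,4,2)$, and with $a = \tfrac{1}{\sqrt N}(1,\dots,1)$ one has $\EE(|\sum_{i=1}^N \tfrac{1}{\sqrt N}\varepsilon_i|^4) = N$, so that $C_e(N,4,2) \ge N^{1/4}$. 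Combining the two bounds gives the claimed equality, and the displayed inequality for arbitrary $a$ follows by undoing the normalization.

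The argument is largely bookkeeping once the Lemma is available; the one genuine point requiring care is recognizing that the maximum in the Lemma is dominated by the Theorem~\ref{main} term $N$ rather than by the bounded classical term, and that the extremal vector of Theorem~\ref{main} lies in the exchangeable class and hence certifies the lower bound within that class. The only real edge case is $N=2$, where pairwise independence already forces full independence and the crude estimate $\max\{N,3\}$ is not sharp; there one verifies $C_e(2,4,2) = 2^{1/4}$ directly from the classical $p=4$ computation.
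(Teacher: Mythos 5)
Your proposal is correct and follows exactly the route the paper intends: the Lemma reduces the upper bound to the two quantities in the max, which are controlled by Theorem~\ref{main} (after rescaling by $N^{-1/2}$) and the classical $p=4$ Khintchine identity respectively, while the lower bound comes from observing that the extremal measure $P=\frac1N P_a+\frac{N-1}{N}P_b$ of Theorem~\ref{main} is permutation-invariant and hence admissible for $C_e$. Your explicit treatment of the $\max\{N,3\}$ comparison and the $N=2$ edge case is a useful bit of bookkeeping that the paper leaves implicit, but the argument is the same.
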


\begin{remark}
It is straightforward to verify that the measure $P=law(\varepsilon)$ derived in  Theorem is in fact $3$-wise independent, and thus we immediately obtain analogues of the preceding results for $k=3$:

$$
C(N,p,3) \geq N^{1/2-1/p}
$$
and
$$
C_{e}(N,4,3) = N^{1/4}.
$$
\end{remark}
\begin{remark}
Generally speaking, one can identify exchangeable,   $k$-wise independent random variables $x_1, \ldots, x_N$ on $\mathbb{R}$ having equal fixed marginals $P_1=law(x_i)$ with permutation symmetric probability measures $P=law(x_1, \ldots, x_N)$ on $\mathbb{R}^N$ whose $k$-fold marginals are $\otimes ^k P_1$.  The set of measures satifying these constraints is a convex set, and identifying the set of extremal points, or vertices, of this set is an interesting and nontrivial question.

It is easy to see upon inspection of the proof of Theorem 2.1 that the measure  $law(\varepsilon)$ we construct is the unique maximizer of the linear functional $law(\varepsilon)\mapsto \EE(\sum_{i=1}^N\varepsilon_i)$ on the convex set of symmetric, pairwise independent Rademacher probability measures.  As a consequence of this proof, we have therefore identified an extremal point of this set.

\end{remark}





\end{document}